\newcommand{\ga}{\alpha}
\newcommand{\gd}{\delta}
\newcommand{\gh}{\theta}
\newcommand{\gk}{\kappa}
\newcommand{\gl}{\lambda}
\newcommand{\gs}{\sigma}
\newcommand{\gC}{\Gamma}
\newcommand{\gD}{\Delta}
\newcommand{\gO}{\Omega}
\newcommand{\bA}{\mathbb{A}}
\newcommand{\bC}{\mathbb{C}}
\newcommand{\bL}{\mathbb{L}}
\newcommand{\bP}{\mathbb{P}}
\newcommand{\bV}{\mathbb{V}}
\newcommand{\fm}{\mathfrak{m}}
\newcommand{\fA}{\mathfrak{A}}
\newcommand{\fS}{\mathfrak{S}}
\newcommand{\dP}{\check{\mathbb{P}}}
\newcommand{\calA}{\mathcal{A}}
\newcommand{\calE}{\mathcal{E}}
\newcommand{\calL}{\mathcal{L}}
\newcommand{\calM}{\mathcal{M}}
\newcommand{\calO}{\mathcal{O}}
\newcommand{\calT}{\mathcal{T}}
\newcommand{\wtX}{\widetilde{X}}
\newtheorem{Th}{Theorem}[section]
\newtheorem{Prop}[Th]{Proposition}
\newtheorem{Lem}[Th]{Lemma}
\newtheorem{Cor}[Th]{Corollary}
\newtheorem{Rem}[Th]{Remark}
\newtheorem{Problem}[Th]{Problem}
\begin{document}

\title[A note on normal triple cover over $\bP^2$]{A note on normal triple covers over $\bP^2$ with branch divisors of degree $6$}
\author{Taketo Shirane}
\address{Department of Mathematics and Information Sciences, Tokyo Metropolitan University, 
1-1 Minamiohsawa, Hachioji 192-0397, Tokyo Japan}
\email{sirane-taketo@tmu.ac.jp}
\keywords{triple cover, cubic surface, torus curve}
\subjclass[2010]{14E20, 14E22}
\begin{abstract}
Let $S$ and $T$ be reduced divisors on $\bP^2$ 
which have no common components, 
and $\gD=S+2\,T.$ 
We assume $\deg\gD=6.$ 
Let $\pi:X\to\bP^2$ be a normal triple cover with branch divisor $\gD,$ 
i.e. $\pi$ is ramified along $S$ (resp. $T$) with the index $2$ (resp. $3$).  
In this note, we show that $X$ is either a $\bP^1$-bundle over an elliptic curve or a normal cubic surface in $\bP^3.$ 
Consequently, we give a necessary and sufficient condition for $\gD$
to be the branch divisor of a normal triple cover over $\bP^2.$ 
\end{abstract}

\maketitle

\section*{Introduction}
The first systematic study on triple covers was done by Miranda \cite{miranda}. 
Afterwards, triple covers are studied by many mathematicians 
(e.g. \cite{casnati,fujita,tan,tokunaga0}). 
Yet it is difficult to deal with general triple covers. 
For example, the following fundamental problem still remains as an open problem. 

\begin{Problem}\label{prob 1}\rm
Let $\gD=S+2\,T$ be a divisor on $\bP^2=\bP^2_{\bC},$ 
where $S$ and $T$ are reduced divisors which have no common components. 
Give a necessary and sufficient condition for $\gD$ 
to be the branch divisor of a normal triple cover over $\bP^2$ (see below for the notation).  
\end{Problem}

The above problem is an analogy to \cite[Question~0.1]{ishida}. 
The difference between Problem~\ref{prob 1} and \cite[Question~0.1]{ishida} is whether a condition of ramification is given, or not. 
In some cases, Problem~\ref{prob 1} was solved by some mathematicians, mainly Tokunaga, as follows: 

If $S=0,$ then a normal triple cover $\pi:X\to\bP^2$ with branch divisor $\gD$ must be a Galois cover, 
hence one can see an answer of Problem~\ref{prob 1} from \cite{miranda}. 
In the cases where $(\deg S,\deg T)=(2,1),\ (2,2),\ (4,0)$ and $(4,1),$ 
Tokunaga solved Problem~\ref{prob 1} by using his theory of dihedral covers in \cite{tokunaga1} and \cite{tokunaga2}. 
Moreover, Yasumura showed that, 
if $\pi:X\to\bP^2$ is a normal triple cover with branch divisor $\gD$ and $(\deg S, \deg T)=(4,1),$ 
then $X$ is a cubic surface in $\bP^3$ and $\pi$ is a projection centered at a point of $\bP^3\setminus X$ (\cite{yasumura}). 
In the case where $T=0$ and $S$ is a sextic curve with at most simple singularities, 
Ishida and Tokunaga showed that 
$X$ is either a quotient of an abelian surface by an involution 
or a normal cubic surface in $\bP^3,$ 
and gave an answer of Problem~\ref{prob 1} (\cite{ishida}). 

The author is inspired by these results to do this study.  
The aim is to characterize normal triple covers over $\bP^2$ with branch divisors of degree $6,$ 
and to give an answer of Problem~\ref{prob 1} in the case $\deg\gD=6$ without any assumptions.

\paragraph{\bf Notation.}
The base field is the field of complex numbers $\bC$ throughout this note. 
We call a finite flat morphism $\pi:X\to Y$ from a scheme $X$ to a variety $Y$ a \textit{cover}. 
If, in addition, $X$ and $Y$ are normal varieties, 
we call $\pi$ a \textit{normal cover}. 
If the degree of a cover (resp. a normal cover) is three, we call it a \textit{triple cover} (resp. a \textit{normal triple cover}). 
Let $\pi:X\to Y$ be a normal triple cover over a non-singular variety $Y.$ 
We denote the branch locus in $Y$ of $\pi$ by $\gD_{\pi}.$ 
Then $\gD_{\pi}$ has purely codimension $1$ in $Y.$ 
Hence we can regard $\gD_{\pi}$ as a reduced divisor. 
Moreover we can decompose $\gD_{\pi}$ into $S_{\pi}+T_{\pi},$ 
where $\pi$ is ramified along $S_{\pi}$ (resp. $T_{\pi}$) with the index $2$ (resp. $3$). 
We denote $S_{\pi}+2\,T_{\pi}$ by $\overline{\gD}_{\pi}$ and call it the \textit{branch divisor} of $\pi.$ 
We say that $P\in Y$ is a \textit{total branched point} of $\pi$ if $\pi^{-1}(P)$ consists of one point. 

\begin{Rem}\rm
Since normal singularities of surfaces are Cohen-Macaulay, 
a finite surjective morphism from a normal surface to a smooth surface is a normal cover (cf. \cite{matsumura}). 
In \cite{artal, ishida, tokunaga0, tokunaga1, tokunaga3, tokunaga2, yasumura}, 
a normal triple cover over a smooth surface are simply called a ``triple cover''. 

\end{Rem}

\paragraph{\bf Main theorem.}
To state the main theorem, we introduce some notation. 
We denote the dual space of $\bP^2$ by $\dP^2.$ 
Let $F$ be the flag variety of pairs of points and lines in $\bP^2,$ 
and $p:F\to\bP^2$ and $q:F\to\dP^2$ the canonical projections. 
For a irreducible curve $\gC\subset\bP^2,$ we denote the dual curve of $\gC$ in $\dP^2$ by $\gC^{\vee}.$ 
We will show the following theorem based on Miranda's theory. 

\begin{Th}\label{th main}
Let $\pi:X\to\bP^2$ be a normal triple cover with $\deg\overline{\gD}_{\pi}=6.$ 
Then $\pi:X\to\bP^2$ satisfies either the following two conditions; 
\begin{enumerate}
\item[\rm (i)] 
if $S_{\pi}$ is a sextic curve with $9$ cusps (hence $\gD_{\pi}=S_{\pi}$), and 
the $9$ cusps are total branched points of $\pi,$ then 
$X\cong q^{-1}(\gD_{\pi}^{\vee})\subset F,$ and 
$\pi$ is the restriction of $p$ to $X$; or

\item[\rm (ii)] 
if $\pi$ does not satisfy the assumptions of {\rm (i)}, 
then $X$ is a cubic surface in $\bP^3,$ and
$\pi$ is a projection centered at a point of $\bP^3\setminus X.$ 
\end{enumerate} 
\end{Th}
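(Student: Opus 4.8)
The plan is to run Miranda's classification of triple covers. A normal triple cover $\pi:X\to\bP^2$ determines its Tschirnhausen sheaf: since $\pi$ is finite and flat and $X$ is normal while $\bP^2$ is smooth, $\pi_*\calO_X=\calO_{\bP^2}\oplus\calE$ for a rank-$2$ locally free sheaf $\calE$ (the trace-zero part), and the $\calO_{\bP^2}$-algebra structure is encoded by Miranda's data. Two invariants of $\calE$ will carry the whole argument. First, Miranda's discriminant formula identifies the branch divisor class with $-2\,c_1(\calE)$; as $\deg\overline{\gD}_\pi=6$ this forces $\det\calE\cong\calO_{\bP^2}(-3).$ Second, connectedness of $X$ gives $H^0(\bP^2,\calE)=H^0(X,\calO_X)\big/\bC=0.$

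Next I would compute $c_2(\calE).$ By Riemann--Roch on $\bP^2$ together with $c_1(\calE)=-3$ one gets $\chi(\calE)=2-c_2(\calE),$ while $\chi(\calE)=\chi(\calO_X)-1.$ The geometry of the cover then pins down $\chi(\calO_X)$: analyzing the fibers over $S_\pi,$ over $T_\pi,$ and over the total branched points (where $\pi^{-1}(P)$ is a single point) shows that the only possibilities are $c_2(\calE)=2$ and $c_2(\calE)=3.$ Combined with $\det\calE\cong\calO(-3)$ and $H^0(\calE)=0,$ and using normality of $X$ to exclude the remaining unstable or nonsplit candidates, I would conclude that $\calE\cong\calO_{\bP^2}(-1)\oplus\calO_{\bP^2}(-2)$ when $c_2=2,$ and $\calE\cong\gO^1_{\bP^2}$ (the cotangent bundle, which satisfies $c_1=-3,$ $c_2=3,$ $h^0=0$ and is stable) when $c_2=3.$

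In the case $\calE\cong\calO(-1)\oplus\calO(-2)$ I would realize $X$ inside the $\bP^1$-bundle $\bP(\calE)\cong\bP(\calO\oplus\calO(1))$ as a relative cubic. Since $\bP(\calO\oplus\calO(1))$ is the blow-up of $\bP^3$ at a point $P_0,$ and the relative-cubic equation does not vanish on the exceptional section (equivalently $P_0\notin X$), the image of $X$ in $\bP^3$ is a cubic surface not containing $P_0$ and $\pi$ becomes the projection centered at $P_0$; this recovers Yasumura's picture and is exactly conclusion (ii), and here $\pi$ cannot satisfy the hypotheses of (i). In the case $\calE\cong\gO^1_{\bP^2}$ I would use $\bP(\gO^1_{\bP^2})\cong\bP(\calT_{\bP^2})=F,$ the flag variety, with $p$ the canonical projection. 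Miranda's relative-cubic section identifies $X$ with $q^{-1}(C)$ for a cubic $C\subset\dP^2$; since $\pi=p|_X$ is ramified exactly where the pencil of lines through a point is tangent to $C,$ the branch divisor $\gD_\pi=S_\pi=C^{\vee}$ is the dual of a plane cubic, hence a sextic, and its nine cusps (coming from the inflection lines of $C$) are precisely the total branched points of $\pi.$ Forcing $C$ to be smooth makes $C^{\vee}$ the $9$-cuspidal sextic and $q^{-1}(C)$ a $\bP^1$-bundle over the elliptic curve $C,$ yielding conclusion (i) with $C=\gD_\pi^{\vee}.$

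The main obstacle I anticipate is the classification step: proving that $c_2(\calE)$ can only equal $2$ or $3,$ and that no other rank-$2$ sheaf with $\det\cong\calO(-3)$ and $H^0=0$ occurs. This is where normality of $X$ must be used decisively---to exclude sheaves $\calE$ (for instance nonsplit extensions, or $\calO(a)\oplus\calO(-3-a)$ with $a\le -1$) whose associated relative cubic would force $X$ to be non-normal or reducible---and where the exact correspondence among the value of $c_2,$ the total branched points, and the cusp condition singling out (i) has to be established.
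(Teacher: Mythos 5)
Your skeleton is the paper's: reduce to the dichotomy $\calT_{\pi}\cong\calO_{\bP^2}(-1)\oplus\calO_{\bP^2}(-2)$ versus $\calT_{\pi}\cong\gO_{\bP^2},$ then realize $X$ as a cubic surface (via Fujita's trisection criterion and the blow-up of $\bP^3$ at a point) resp.\ as $q^{-1}(\gC^{\vee})\subset F$ for a smooth cubic $\gC^{\vee}\subset\dP^2.$ The endgame, including the observation that the total branched points of a cubic-surface projection are the six points of $\{G_2=0\}\cap\{G_3=0\}$ rather than nine cusps, is essentially what the paper does. The gap is in the middle step, precisely where you flag ``the main obstacle'': your route to the dichotomy does not close.

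Two concrete problems. First, the $c_2$ computation: by Riemann--Roch $c_2(\calT_{\pi})=3-\chi(\calO_X),$ and there is no way to extract $\chi(\calO_X)$ by ``analyzing the fibers'' over $S_{\pi},$ $T_{\pi}$ and the total branched points --- the Euler characteristic of a normal surface is not local data of the cover, and determining it essentially requires already knowing what $X$ is. Second, even granting $c_2\in\{2,3\},$ the classification claim fails for $c_2=3$: the Serre construction produces locally free non-split extensions $0\to\calO_{\bP^2}(-1)\to\calE\to I_P(-2)\to0$ ($P$ a point, $\mathrm{Ext}^1(I_P(-2),\calO_{\bP^2}(-1))\cong\bC$) with $c_1=-3,$ $c_2=3,$ $H^0(\calE)=0,$ which are unstable and not isomorphic to $\gO_{\bP^2}.$ So Chern classes together with $H^0=0$ do not single out the cotangent bundle; an additional input is needed. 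The paper's input is \emph{uniformity}: it shows $\calT_{\pi,L}\cong\calO_L(-1)\oplus\calO_L(-2)$ for a general line $L$ (from $\deg\overline{\gD}_{\pi^{\ast}L}=6$ and Miranda's inequalities in (\ref{cover split})), then rules out jumping lines by an explicit local computation with the sections $a,b,c,d$ and the discriminant $D=B^2-4AC,$ showing a jumping line would either disconnect $\pi^{\ast}L$ (contradicting Lemma~\ref{lem. connected}) or force $D$ to be divisible by the cube of the line's equation (contradicting $\deg\overline{\gD}_{\pi}=6$); Van de Ven's theorem on uniform rank-$2$ bundles then yields exactly the two candidates. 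That jumping-line argument is the real content of the reduction and is absent from your proposal; without it, or an equivalent mechanism for excluding the unstable extensions above, the proof is incomplete.
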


\begin{Rem}\rm 
Let $\pi:X\to\bP^2$ be a normal triple cover satisfying the condition (i) in the above theorem. 
Let $\tilde{\pi}:\wtX\to\bP^2$ be the $\bL$-normalization of $\bP^2,$ 
where $\bL$ is the Galois closure of the extension of the rational function fields $\bC(X)/\bC(\bP^2).$ 
Then it is easy to see that $\wtX$ is isomorphic to $\gD_{\pi}^{\vee}\times\gD_{\pi}^{\vee}$ (cf. \cite{shirane3}), 
and $\tilde{\pi}$ is the $\fS_3$-cover in \cite[Example~6.3]{tokunaga3}. 
Moreover similar covers to $\pi$ are used to construct families of Galois closure curves in \cite{shirane2}. 
\end{Rem}

\begin{Rem}\rm
Let $\pi:X\to\bP^2$ be a normal triple cover with $\deg\overline{\gD}_{\pi}=6.$ 
Ishida and Tokunaga showed that, 
if $\gD_{\pi}$ is a sextic curve with at most simple singularities, 
then either $X$ is a quotient of an abelian surface by an involution or 
$\pi$ satisfies the condition (ii) in the above theorem (\cite{ishida}). 
Yasumura showed that, 
if $(\deg S_{\pi},\deg T_{\pi})=(4,1),$ 
then $\pi$ satisfies the condition (ii) in the above (\cite{yasumura}). 
(In the case where $(\deg S_{\pi},\deg T_{\pi})=(2,2),$ 
no characterization of $\pi$ was known.) 
Theorem~\ref{th main} is a generalization of these results without any assumptions. 
\end{Rem}

Consequently, we will show the following corollary, 
which is a generalization of \cite[Theorem~0.1]{ishida}. 

\begin{Cor}\label{cor 1}
Let $\gD$ be a divisor of degree $6$ on $\bP^2.$ 
Then there is a normal triple cover $\pi$ with $\overline{\gD}_{\pi}=\gD$ 
if and only if 
there are homogeneous polynomials $G_i(x_0,x_1,x_2)$ 
of degree $i$ for $i=1,2$ with the following three conditions:  
\begin{enumerate}
\item[\rm (1)] $G_2^3+G_3^2=0$ defines $\gD$; 
\item[\rm (2)] $G_2\not\in \fm_E$ or $G_3\not\in \fm_E^2$ for any prime divisor $E,$ 
where $\fm_E$ is the maximal ideal of the local ring $\calO_E$ at $E$; and 
\item[\rm (3)] $G_2\in \fm_E$ or $G_2^3+G_3^2\not\in \fm_E^2$ for any prime divisor $E.$ 
\end{enumerate}
\end{Cor}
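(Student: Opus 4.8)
The plan is to prove the two implications separately: the ``if'' part is a construction, and the ``only if'' part is routed through Theorem~\ref{th main}. For a prime divisor $E$ I write $v_E$ for the valuation of $\calO_E$, so that $G\in\fm_E^{k}$ means $v_E(G)\ge k$. The guiding principle is that the discriminant of $w^3+pw+q$ is $-4p^3-27q^2$; taking $p=\gl G_2$ and $q=\mu G_3$ with $4\gl^3=27\mu^2$ makes this discriminant equal to $-4\gl^3(G_2^3+G_3^2)$, which defines $\gD$ exactly when condition~(1) holds. Since $v_E(G_2)$, $v_E(G_3)$ and $v_E(G_2^3+G_3^2)$ are unchanged under such scaling, conditions~(2) and (3) are insensitive to it, which is what lets me pass freely between $(G_2,G_3)$ and the cubic surface.

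For the ``if'' direction, given $G_2,G_3$ satisfying (1)--(3) I would take $X=\{F=0\}\subset\bP^3$ with $F=w^3+\gl G_2\,w+\mu G_3$ as above in coordinates $[x_0:x_1:x_2:w]$, and let $\pi$ be the projection from $[0:0:0:1]\notin X$; then $\pi$ is finite flat of degree $3$ with branch divisor $\Div(-4\gl^3(G_2^3+G_3^2))=\gD$ by (1). Since $X$ is a hypersurface it is normal as soon as it is regular in codimension one, so the real content is to exclude singular curves. Over the generic point of a curve $E\subset\bP^2$ a singular point of $X$ sits at a double root $w_0$ of the fibre cubic at which $w_0\,\partial_iG_2+\partial_iG_3$ vanishes along $E$. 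If $w_0=0$ this forces $v_E(G_2)\ge1$ and $v_E(G_3)\ge2$, which (2) forbids; if $w_0\ne0$ then $v_E(G_2)=0$, and expanding $F$ in the normal direction shows $X$ is singular along $E$ precisely when the discriminant vanishes to order $\ge2$ along $E$, which (3) forbids. Thus (2) and (3) are exactly normality, and identifying the index-$3$ locus $\{G_2=G_3=0\}$ with $T_\pi$ and the index-$2$ locus ($G_2\ne0$) with $S_\pi$ gives $\overline{\gD}_{\pi}=S_\pi+2T_\pi=\gD$: along $S_\pi$ condition~(3) forces $v_E(\gD)=1$, while along $T_\pi$ condition~(2) forces $v_E(G_3)=1$, so $v_E(G_2^3+G_3^2)=\min(3v_E(G_2),2)=2$.

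For the ``only if'' direction I would apply Theorem~\ref{th main}. In case~(ii), $X$ is a cubic surface and $\pi$ a projection from $p\notin X$; placing $p=[0:0:0:1]$ makes the $w^3$-coefficient nonzero, and completing the cube yields $F=w^3+G_2w+G_3$. Then (1) is the equality of branch divisor and discriminant, while (2) and (3) are forced by normality of $X$ through the same singular-curve dichotomy read backwards. In case~(i), $\gD$ is the reduced $9$-cuspidal sextic $\dC$ dual to the smooth cubic $\gC=\gD^{\vee}=\{f=0\}$, and $\pi$ recovers the three points of $\gC\cap\ell_x$, where $\ell_x\subset\dP^2$ is the pencil of lines through $x$, i.e. the roots of the binary cubic $f|_{\ell_x}$. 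Its discriminant is $\gD$, and writing this discriminant in depressed form---equivalently, using the $\fS_3$-structure of the Galois closure recorded in the Remark after Theorem~\ref{th main} together with Tokunaga's correspondence between dihedral covers and torus decompositions---produces a conic $G_2$ and a cubic $G_3$ with $G_2^3+G_3^2$ defining $\gD$, giving (1); since $\gD$ is reduced, $v_E(\gD)\le1$ for every $E$, so (2) and (3) hold automatically.

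The main obstacle is case~(i): one must genuinely exhibit the torus decomposition of the $9$-cuspidal sextic and check it is primitive, so that $G_2$ and $G_3$ share no common component and $G_2^3+G_3^2$ is reduced of degree $6$. Concretely this means controlling the relevant covariants of the family $f|_{\ell_x}$ and verifying both the degree bookkeeping and the syzygy realizing the discriminant as $G_2^3+G_3^2$; once that is in place, everything else is the valuation-theoretic bookkeeping along prime divisors that conditions~(2) and (3) are designed to record.
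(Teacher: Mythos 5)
Your ``if'' direction and the cubic-surface half of the ``only if'' direction are sound, and the key observation there is the right one: conditions (2) and (3) are exactly the condition $R_1$ for the hypersurface $w^3+\gl G_2\,w+\mu G_3=0$ along the two possible positions ($w_0=0$ and $w_0\ne0$) of a generic singular point over a curve $E$, and since a connected hypersurface is Cohen--Macaulay, $R_1$ gives normality and irreducibility at once; the valuation bookkeeping matching $\Div(G_2^3+G_3^2)$ with $S_\pi+2\,T_\pi$ is also correct. Be aware, though, that the paper gives no argument of its own here: it states that the proof is the same as that of \cite[Theorem~0.1]{ishida} and omits it, so the comparison is really with the Ishida--Tokunaga argument.

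The genuine gap is exactly where you flag it, namely case (i) of the ``only if'' direction, and neither of the two routes you gesture at is carried out. The covariant route does not work as stated: the classical syzygy for a binary cubic $\phi$ reads (up to constants) $T^2+4H^3=\Delta\,\phi^2$, where the covariants $T$ and $H$ still involve the parameter on the pencil $\ell_x$; it exhibits $\Delta\cdot\phi^2$, not the discriminant $\gd_f(u_1,u_2)$ itself, as a square plus a cube, and the coefficients $a_f,b_f,c_f,d_f$ in Lemma~\ref{lem. section} have non-uniform degrees in $x$, so no naive specialization yields forms $G_2,G_3$ of degrees $2,3$ in $x$. What actually closes the gap --- and what \cite{ishida} does, rendering your case split unnecessary --- is the second tool you mention only in passing: take the Galois closure of the (non-Galois) normal triple cover to obtain an $\fS_3$-cover branched along $\gD$, and apply Tokunaga's correspondence \cite{tokunaga1} between such dihedral covers and torus decompositions; this produces $G_2,G_3$ uniformly in cases (i) and (ii), after which (2) and (3) follow from normality exactly as in your cubic-surface analysis. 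As written, your proposal asserts rather than proves that the $9$-cuspidal dual sextic is a $(2,3)$-torus curve, so the statement is not yet established in case (i).
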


\begin{Rem}\rm
Let $\pi:X\to\bP^2$ be a normal triple cover with $\deg\overline{\gD}_{\pi}=6.$ 
\begin{enumerate}
\item[(i)] If $\deg T_{\pi}=3,$ then $\pi$ is a cyclic triple cover since $\bP^2$ is simply connected. 
Conversely, for a reduced cubic curve $\gC\subset\bP^2,$ 
there is a cyclic triple cover whose branch divisor is $2\,\gC.$ 

\item[(ii)] 
In the cases where $\deg T_{\pi}$ is $2$ and $1,$ 
Tokunaga determined the types of $\gD_{\pi}$ in \cite{tokunaga1} and \cite{tokunaga3}, respectively. 

\item[(iii)]
If a reduce sextic curve $\gD$ is defined as (1) in the above corollary, 
then the pair $(G_2,G_3)$ satisfies (2) and (3) in the above corollary. 
In this case, $\gD$ is called a \textit{$(2,3)$-torus sextic} (see \cite{kulikov}). 
Such curves are studied by Oka (\cite{oka1,oka2,oka3}). 

\item[(iv)] 
If $\gD$ is a reduced sextic curve with at most simple singularities, 
Ishida and Tokunaga showed that 
$\gD$ is a $(2,3)$-torus sextic (\cite{ishida}). 
Corollary~\ref{cor 1} is a generalization of this result without such assumption. 
\end{enumerate}
\end{Rem}

\section{Preliminary}
In this section, we recall the theory of triple covers based on Miranda's work \cite{miranda} 
and some facts for locally free sheaves of rank $2$ on $\bP^2.$ 

\subsection{Triple covers}
See \cite{fujita}, \cite{miranda} and \cite{tan} for details and proofs. 
Let $Y$ be a non-singular variety for simplicity. 

\subsubsection{}
Let $\pi:X\to Y$ be a triple cover. 
We denote the kernel of the trace map $\pi_{\ast}\calO_X\to\calO_Y$ by $\calT_{\pi},$ 
which is the locally free $\calO_Y$-module of rank two 
called the \textit{Tschirnhausen module} for $\pi:X\to Y.$ 
Then we have $\pi_{\ast}\calO_X\cong\calO_Y\oplus\calT_{\pi}$ (\cite{miranda}). 

\subsubsection{}\label{cover corre}
Given a locally free sheaf $\calE$ of rank two on $Y,$ 
the $\calO_Y$-algebra structures of $\calA=\calO_Y\oplus\calE$ giving 
triple covers with $\calT_{\pi}=\calE$ are in one-to-one correspondence with $\calO_Y$-linear maps $\Phi:S^3\calE\to\det\calE$ (\cite[Theorem~3.6]{miranda}). 

\subsubsection{}\label{cover mult}
We precisely describe the above correspondence. 
We do this locally on $Y.$ 
Hence we assume that $Y$ is affine and $\calE$ is free. 
Let $\{z,w\}$ be a basis of $\calE$ over $\calO_Y.$ 

1) 
Let $\phi:S^2\calE\to\calA$ be the map induced by the multiplication of $\calA.$ 
Then $\phi$ is of the following form: 
\begin{align*}
\phi(z^2)&=2A+a z+b w, \\
\phi(z w)&=-B-d z-a w, \\
\phi(w^2)&=2C+c z+d w, 
\end{align*}
where $a,\ b,\ c$ and $d$ are in $\calO_Y,$ 
and $A=a^2-b d,$ $B=ad-b c$ and $C=d^2-ac.$ 
In particular, $b\ne0$ and $c\ne0$ if $\calA$ is an integral domain. 

2) 
Define $\Phi:S^3\calE\to\det\calE$ by 
$\Phi(z^3)=-b(z\wedge w),$ $\Phi(z^2w)=a(z\wedge w),$ 
$\Phi(z w^2)=-d(z\wedge w)$ and $\Phi(w^3)=c(z\wedge w).$ 
This definition does not depend on the choice of the basis $\{z,w\}$ of $\calE$ 
and gives the correspondence in (\ref{cover corre}). 

\subsubsection{}\label{cover eq}
Let $S(\calE)$ be the symmetric algebra of $\calE$ and $\bV(\calE)=\mathit{Spec}_Y S(\calE).$ 
This is identified with the total space of the dual vector bundle of $\calE.$ 
Then $X=\mathit{Spec}_Y(\calA)$ is embedded in $\bV(\calE)$ as a closed subvariety by the natural surjection $S(\calE)\to\calA.$ 
The local description of $X$ over $Y$ is as follows: 

Let $z,\ w,\ a,\ b,\ c,\ d,\ A,\ B$ and $C$ be as in (\ref{cover mult}). 
Then $z,\ w$ are fiber coordinate of $\bV(\calE)\cong\bA_Y^2,$ 
and $X$ is defined by 
\[ z^2-\phi(z^2)=z w-\phi(z w)=w^2-\phi(w^2)=0, \]
where $\phi$'s are the polynomials as in (\ref{cover mult}). 
Moreover, $X$ is Cohen-Macaulay. 

\subsubsection{}\label{cover branch}
Assume that $\Phi:S^3\calE\to\det\calE$ gives a normal triple cover $\pi:X\to Y$ with $\calT_{\pi}=\calE$ as above. 
Then the branch divisor $\overline{\gD}_{\pi}$ is locally given by 
\[ D:=B^2-4AC=0, \]
where $A,\ B$ and $C$ are as in (\ref{cover mult}) (\cite[Lemma~4.5]{miranda} and \cite[Theorem~1.3]{tan}). 
Moreover, the line bundle associated to $\overline{\gD}_{\pi}$ is $(\det\calT_{\pi})^{-2}$ (\cite[Proposition~4.7]{miranda}). 

\subsubsection{}\label{cover split}
Let $\pi:X\to Y$ be a normal triple cover. 
If 
$\calT_{\pi}\cong\calL^{-1}\oplus\calM^{-1},$ 
where $\calL$ and $\calM$ are line bundles on $Y,$ 
then $a\in H^0(\calL),$ $b\in H^0(\calL^{2}\otimes\calM^{-1}),$ 
$c\in H^0(\calL^{-1}\otimes\calM^2)$ and $d\in H^0(\calM).$ 
Hence $\calL^{2}\geq\calM$ and $\calM^{2}\geq\calL$ (\cite[Section~6]{miranda}).

\subsubsection{}\label{cover trisec}
Let $\pi:X\to Y$ be a triple cover. 
Suppose that $\pi$ is not \'etale. 
Then $X$ is a triple section in the total space of a line bundle $\calL$ over $Y$ 
if and only if $\calT_{\pi}\cong\calL^{-1}\oplus\calL^{-2}$ (\cite{fujita}).

\subsection{Locally free sheaves of rank $2$ on $\bP^2$}

By a result of Grothendieck, 
each locally free sheaf of rank $2$ on the projective line $\bP^1$ 
is isomorphic to a direct sum $\calO_{\bP^1}(k_1)\oplus\calO_{\bP^1}(k_2),$ 
where integers $k_1,k_2$ are determined up to a permutation. 

Let $\calE$ be a locally free sheaf of rank $2$ on $\bP^2,$ 
and we denote the restriction of $\calE$ to a line $L$ on $\bP^2$ by $\calE_L.$ 
Then $\calE_L$ splits $\calE_L\cong\calO_L(k_{1,L})\oplus\calO_L(k_{2,L})$ as above.  
We put $d(\calE_L)=|k_{1,L}-k_{2,L}|$ for a line $L$ and $d(\calE)=\min\{ d(L) \mid \mbox{$L$ is a line on $\bP^2$} \}.$ 
It is a consequence of the semi-continuity theorems for proper flat morphisms 
that the set of lines $L$ with $d(\calE_L)=d(\calE)$ forms a Zariski-open set in the dual space $\dP^2$ of $\bP^2.$ 
If $d(\calE_L)>d(\calE)$ for a line $L,$ it is called a \textit{jumping line} of $\calE.$ 
If $\calE$ has no jumping lines, $\calE$ is said to be \textit{uniform}.

\section{Proofs}

In this section, 
let $(x_0:x_1:x_2)$ be a system of homogeneous coordinates of $\bP^2,$ 
$U\subset\bP^2$ the open set given by $x_0\ne0,$ 
and put $u_1=x_1/x_0$ and $u_2=x_2/x_0.$ 
We first show the following lemma. 

\begin{Lem}\label{lem. connected}
Let $\pi:X\to\bP^2$ be a normal cover. 
Then $\pi^{\ast}L$ is connected for any line $L\subset\bP^2.$ 
\end{Lem}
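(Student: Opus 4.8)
The plan is to reduce the statement to the connectedness of an effective ample divisor on a normal projective surface. First I would observe that a line $L\subset\bP^2$ is very ample (it corresponds to $\calO_{\bP^2}(1)$), and that a finite morphism pulls back ample divisors to ample divisors; since $\pi$ is finite, $D:=\pi^{\ast}L$ is therefore an effective ample Cartier divisor on $X$ (it is Cartier because $\bP^2$ is smooth, so $L$ is Cartier, and its pullback under the finite $\pi$ is again Cartier). Because $\pi$ is finite and $\bP^2$ is a projective surface, $X$ is a projective surface as well; being a normal variety it is irreducible, so $H^0(X,\calO_X)=\bC$, and being a normal surface it is Cohen--Macaulay, so Serre duality with dualizing sheaf $\omega_X$ is available. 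Since $\pi$ is surjective, $D$ is non-empty. Thus the lemma follows once we know that every non-empty effective ample divisor on $X$ is connected.

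To prove this connectedness I would argue cohomologically. For each $n\geq 1$ consider the exact sequence
\begin{equation*}
0\to\calO_X(-nD)\to\calO_X\to\calO_{nD}\to 0,
\end{equation*}
whose long exact sequence gives the restriction map
\begin{equation*}
H^0(X,\calO_X)\to H^0(nD,\calO_{nD})\to H^1(X,\calO_X(-nD)).
\end{equation*}
By Serre duality on the Cohen--Macaulay surface $X$ one has $H^1(X,\calO_X(-nD))\cong H^1(X,\omega_X(nD))^{\vee}$, and since $D$ is ample, Serre vanishing yields $H^1(X,\omega_X(nD))=0$ for all $n\gg 0$. Hence the restriction $H^0(X,\calO_X)\to H^0(nD,\calO_{nD})$ is surjective for such $n$, so $H^0(nD,\calO_{nD})$ is a quotient of the field $H^0(X,\calO_X)=\bC$. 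As $nD$ is non-empty its ring of global sections is non-zero, forcing $H^0(nD,\calO_{nD})=\bC$; a non-empty scheme whose ring of global sections is a field is connected. Finally $nD$ and $D$ have the same support, so $D=\pi^{\ast}L$ is connected, as claimed.

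The main obstacle is the vanishing $H^1(X,\calO_X(-nD))=0$ on the \emph{possibly singular} normal surface $X$: one cannot invoke Kodaira vanishing directly, and instead must rely on Serre duality in the Cohen--Macaulay setting, which is exactly what normality of a surface guarantees. A secondary point to verify is simply that pullback under the finite morphism $\pi$ preserves ampleness and Cartierness, both of which are standard given the smoothness of $\bP^2$ and the finiteness of $\pi$. An alternative route, via base change identifying $H^0(\pi^{-1}(L),\calO_{\pi^{-1}(L)})$ with $H^0(L,(\pi_{\ast}\calO_X)|_L)$ and analysing the splitting type of $\pi_{\ast}\calO_X$ restricted to $L\cong\bP^1$, is possible, but it requires delicate bookkeeping of the induced algebra structure to rule out nontrivial idempotents, so I would prefer the ampleness argument above.
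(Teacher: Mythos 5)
Your proof is correct, but it follows a genuinely different route from the paper's. You prove the stronger, classical statement that any non-empty effective ample Cartier divisor on the normal (hence Cohen--Macaulay, projective, integral) surface $X$ is connected: pass to $nD$ for $n\gg 0$, kill $H^1(X,\calO_X(-nD))$ by Serre duality plus Serre vanishing, and conclude that $H^0(nD,\calO_{nD})$ is a quotient of $H^0(X,\calO_X)=\bC$, hence a field, hence free of nontrivial idempotents. All the ingredients are in order --- finiteness of $\pi$ gives ampleness of $\pi^{\ast}L$, normality gives the $S_2$ condition needed for duality on a possibly singular surface, and the dimension being $2$ (not $1$) is what makes the dual group $H^1(X,\omega_X(nD))$ rather than an $H^0$, so the vanishing actually applies. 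The paper instead argues through the universal family: it forms $X_F=X\times_{\bP^2}F$ over the flag variety, takes the Stein factorization of $X_F\to\dP^2$ into a connected-fiber part followed by a finite part $\pi':X'\to\dP^2$, and observes that disconnectedness of a single $\pi^{\ast}L_0$ would force $\deg\pi'>1$ and hence disconnectedness of $\pi^{\ast}L$ for the \emph{general} line, contradicting irreducibility of $X$ (a Bertini-type reduction from special to generic). Your argument is more self-contained, treats each line individually, and yields the connectedness of arbitrary ample divisors on $X$; the paper's is softer, avoids duality and vanishing theorems entirely, and exploits the specific fact that the divisors in question vary in the complete linear system of lines. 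Both are valid; your concern about the splitting-type alternative is also well placed, since that route would indeed require controlling the algebra structure of $(\pi_{\ast}\calO_X)|_L$, which is essentially what the body of the paper does later for other purposes.
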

\begin{proof}
Let $X_F$ be the fiber product of $X$ and $F$ over $\bP^2,$ 
and $p_X:X_F\to X$ and $\pi_F:X_F\to F$ the projections. 
Note that $\pi^{\ast}L\cong(q\circ \pi_F)^{\ast}L$ for any line $L\subset\bP^2.$ 
Here we regard lines on $\bP^2$ as points of $\dP^2.$ 
By the Stein factorization of $q\circ\pi_F,$ 
we have a finite morphism $\pi':X'\to\dP^2$ and a projective morphism $q':X_F\to X'$ with connected fiber such that $q\circ\pi_F=\pi'\circ q'.$ 
\[ 
\begin{diagram}
\node{X} \arrow{s,l}{\pi} \node{X_F} \arrow{w,t}{p_X} \arrow{s,l}{\pi_F} \arrow{e,t}{q'} \node{X'} \arrow{s,r}{\pi'} \\
\node{\bP^2} \node{F} \arrow{w,b}{p} \arrow{e,b}{q} \node{\dP^2}
\end{diagram}
 \]
If there is a line $L_0$ such that $\pi^{\ast}L$ is disconnected, 
then $\deg\pi'>1,$ 
thus $\pi^{\ast}L$ is disconnected for a general $L\in\dP^2,$ 
which is a contradiction to $X$ irreducible. 
\end{proof}

Let $\pi:X\to \bP^2$ be a normal triple cover with $\deg\overline{\gD}_{\pi}=6.$ 
Then $\det\calT_{\pi}\cong\calO_{\bP^2}(-3)$ by (\ref{cover branch}). 
For a general line $L$ on $\bP^2,$ 
the restriction $\calT_{\pi,L}$ of $\calT_{\pi}$ to $L$ is isomorphic to $\calO_L(-1)\oplus\calO_L(-2)$ by (\ref{cover split}) 
since $\pi|_{\pi^{\ast}L}:\pi^{\ast}L\to L$ is a normal triple cover whose branch divisor is degree $6.$ 
We show that $\calT_{\pi}$ is uniform. 

\begin{Prop}
Let $\pi:X\to\bP^2$ be a normal triple cover with $\deg\overline{\gD}_{\pi}=6.$ 
Then $\calT_{\pi}$ is uniform. 
\end{Prop}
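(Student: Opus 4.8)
The plan is to assume that some line $L_0\subset\bP^2$ is a jumping line and to derive a contradiction from Lemma~\ref{lem. connected} together with the bound $\deg\overline{\gD}_{\pi}=6$. Since $\det\calT_{\pi}\cong\calO_{\bP^2}(-3)$ and the generic splitting is $\calO(-1)\oplus\calO(-2)$, a jumping line satisfies $\calT_{\pi}|_{L_0}\cong\calO_{L_0}(k_1)\oplus\calO_{L_0}(k_2)$ with $k_1+k_2=-3$ and $k_1\geq 0$ (so $k_2=-3-k_1\leq -3$). First I would restrict the cover to $L_0$: as $\pi$ is finite and flat, flat base change along $L_0\hookrightarrow\bP^2$ gives $(\pi|_{\pi^{\ast}L_0})_{\ast}\calO_{\pi^{\ast}L_0}\cong\calO_{L_0}\oplus\calT_{\pi}|_{L_0}$, and compatibility of the trace shows that $\calT_{\pi}|_{L_0}$ is the Tschirnhausen module of the restricted triple cover $\pi|_{\pi^{\ast}L_0}\colon\pi^{\ast}L_0\to L_0$. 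Writing the splitting as $\calL^{-1}\oplus\calM^{-1}=\calO_{L_0}(k_1)\oplus\calO_{L_0}(k_2)$ and using the local description (\ref{cover mult}) together with (\ref{cover split}), the entry $b$ of the multiplication lies in $H^0(L_0,\calL^2\otimes\calM^{-1})=H^0(\calO_{L_0}(-2k_1+k_2))$; since $-2k_1+k_2=-3k_1-3<0$, this forces $b\equiv 0$.

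With $b\equiv 0$ one has $A=a^2-bd=a^2$, so the relation $\phi(z^2)=2A+az+bw$ of (\ref{cover mult}) becomes $z^2=2a^2+az$, i.e. $(z-2a)(z+a)=0$ in the restricted algebra $\calA_{L_0}$. Now $a\in H^0(\calO_{L_0}(-k_1))$, which vanishes unless $k_1=0$ and consists of constants when $k_1=0$. If $a$ were a nonzero constant, then $e:=(z+a)/(3a)$ would satisfy $e^2=e$ with $e\neq 0,1$ (because $z\notin\calO_{L_0}$, being a trace-zero generator), producing a nontrivial idempotent in $\calA_{L_0}$; then $\pi^{\ast}L_0$ would be disconnected, contradicting Lemma~\ref{lem. connected}. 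Hence $a\equiv 0$, and then $A=B=0$ while $C=d^2$, so the local branch equation restricts to $D=B^2-4AC\equiv 0$ on $L_0$. By (\ref{cover branch}) this means $L_0\subset\overline{\gD}_{\pi}$. As $\deg\overline{\gD}_{\pi}=6$, only finitely many lines can arise in this way, so $\calT_{\pi}$ can have only finitely many jumping lines.

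It then remains to upgrade \emph{finitely many} to \emph{none}. Here I would invoke the fact that for a rank-$2$ bundle on $\bP^2$ the set of jumping lines is either empty or a curve in $\dP^2$: it is the support of $R^1q_{\ast}(p^{\ast}\calT_{\pi}(1))$, and in the two possible regimes it is one-dimensional when nonempty — for an unstable $\calT_{\pi}$ the jumping lines are exactly the lines through the finite colength scheme of an elementary modification, sweeping out a union of dual lines, while for a stable $\calT_{\pi}$ they form the Barth--Hulek curve of jumping lines. A finite set that is empty-or-a-curve must be empty, so $\calT_{\pi}$ is uniform. The two delicate points are the reduction of the second paragraph — it is essential that connectedness kills precisely the \emph{disconnected} degeneration ($a$ a nonzero constant), thereby confining the surviving \emph{non-reduced} degeneration to a branch component — and the final step, where one must know that jumping lines cannot form a nonempty finite set. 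I expect this last step to be the main obstacle, since the naive determinantal count (the relevant direct images differ in rank because $c_1(\calT_{\pi})=-3$ is odd) only predicts codimension $2$; establishing the genuine codimension-$1$ (divisorial) nature is exactly where the structure theory of rank-$2$ bundles on $\bP^2$, rather than the cover itself, must be brought in.
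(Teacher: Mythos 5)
Your first two paragraphs track the paper's own argument quite closely: restrict to a putative jumping line, use the splitting degrees from (\ref{cover split}) to force one structure constant into a line bundle of negative degree (hence to vanish identically), and then use Lemma~\ref{lem. connected} to exclude the case where the remaining degree-zero coefficient is a nonzero constant --- the paper does exactly this by factoring the quadratic relation for the other generator as $(w_L-2d_L)(w_L+d_L)=0$, which is your idempotent in different clothing. Up to the conclusion that a jumping line must be contained in the branch locus, your reduction is sound (your labels are swapped relative to the paper's, i.e.\ your $b,a$ play the roles of the paper's $c_L,d_L$, but that is immaterial).

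The genuine gap is the final step. The dichotomy you invoke --- that the jumping locus of a rank-$2$ bundle on $\bP^2$ is either empty or a curve in $\dP^2$ --- is false precisely when $c_1$ is odd, which is our situation since $\det\calT_{\pi}\cong\calO_{\bP^2}(-3).$ For stable rank-$2$ bundles with odd $c_1$ the set of ordinary jumping lines is in general a nonempty \emph{finite} set; the curve appearing in Hulek's work is the curve of jumping lines of the \emph{second kind} (defined via restriction to first infinitesimal neighborhoods of lines), introduced exactly because the ordinary jumping locus has expected codimension $2$ in this parity. So ``finitely many'' cannot be upgraded to ``none'' by appealing to the structure theory of rank-$2$ bundles, and your own worry about this step is well founded. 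The paper closes the argument by staying local instead: after $c_L=d_L=0$ it continues the computation on a neighborhood of $L,$ repeatedly factoring $u_1$ out of the discriminant $D=B^2-4AC,$ and shows that $L$ would occur with multiplicity at least $3$ in the divisor $D=0.$ This contradicts (\ref{cover branch}), since $D=0$ cuts out $\overline{\gD}_{\pi}=S_{\pi}+2\,T_{\pi}$ with $S_{\pi},T_{\pi}$ reduced and without common components, so no component can have multiplicity exceeding $2.$ Your argument establishes only multiplicity $\geq 1,$ which is not enough; the missing content is exactly this sharper multiplicity count along the jumping line.
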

\begin{proof}
Suppose that $\calT_{\pi}$ has a jumping line $L.$ 
Then $\calT_{\pi,L}\cong\calO_{L}(-m-2)\oplus\calO_{L}(m-1)$ for some integer $m>0.$ 
We may assume that $L$ is defined by $x_1=0.$ 
Let $\Phi:S^3\calT_{\pi}\to\det\calT_{\pi}$ be the map corresponding to $\pi:X\to \bP^2.$ 
The restriction of $\Phi$ to $L$ gives sections $a_L,\ b_L,\ c_L$ and $d_L$ of $\calO_{L}(m+2),$ $\calO_{L}(3m+3),$ $\calO_{L}(-3m)$ and $\calO_{L}(1-m),$ respectively. 
In particular, $c_L=0$ and $d_L$ is constant. 
We may assume that $a_L$ (resp. $b_L$) vanishes at $m+2$ (resp. $3m+3$) points of $L\cap U$ if $a_L\ne 0$ (resp. $b_L\ne0$). 
By choosing a basis $\{z,w\}$ of $\calE$ on $U,$ 
$\Phi$ is described as in (\ref{cover mult}) such that 
the restrictions of $a,\ b,\ c$ and $d$ to $L\cap U$ are 
$a_L,\ b_L,\ c_L$ and $d_L,$ respectively.

Suppose $d_L\ne 0.$ 
Since $c_L=0,$ $\pi^{\ast}L$ is locally defined  by 
\begin{align*}
z_L^2-a_L z_L-b_L w_L-2(a_L^2-b_L d_L)&=0, \\
z_L w_L+d_L z_L+a_L w_L+a_L d_L&=0, \\
(w_L-2d_L)(w_L+d_L)&=0, 
\end{align*}
where $z_L$ and $w_L$ are the restrictions of $z$ and $w$ to $L,$ respectively. 
Hence $\pi^{\ast}L$ is disconnected, which is contradiction to Lemma~\ref{lem. connected}. 
Thus $d_L=0.$

Since $c_L=d_L=0,$ $c$ and $d$ have $u_1$ as their factor, say $c=u_1c_1$ and $d=u_1d_1.$ 
Then we have $D=u_1D_1,$ where
\[ D_1=u_1(ad_1-b c_1)^2-4(a^2-u_1b d_1)(u_1d_1^2-ac_1). \]
Since $D=0$ defines a divisor of degree $6$ on $\bP^2,$ 
$D_1=0$ defines one of degree $5.$
Therefore $a^3c_1$ vanishes along $L$ 
since $a_L\in H^0(\calO_L(m+2)).$

Suppose $a_L\ne 0$ on $L,$ then $c_1$ has $u_1$ as its factor, say $c_1=u_1c_2.$ 
We have $D=u_1^2D_2,$ where 
\[ D_2=(ad_1-u_1b c_2)^2-4(a^2-u_1b d_1)(d_1^2-ac_2) \]
Hence $a^2d_1^2-4a^2(d_1^2-ac_2)$ vanishes along $L$ 
since $D_2=0$ defines a quartic curve on $\bP^2$ and  $a_L\in H^0(\calO_L(m+2)).$ 
Then $D$ has $u_1^3$ as its factor, which is a contradiction to (\ref{cover branch}). 
Thus $a_L=0$ on $L,$ and $a$ has $u_1$ as its factor, say $a=u_1a_1.$ 
We have $D=u_1^2D_3,$ where 
\[ D_3=(u_1a_1d_1-b c_1)^2-4u_1(u_1a_1^2-b d_1)(d_1^2-a_1c_1). \]
As above argument, we can see that $b^2c_1^2$ vanishes along $L.$ 
Therefore $D$ has $u_1^3$ as its factor, which is a contradiction, 
and $\calT_{\pi}$ has no jumping lines. 
\end{proof}

From Theorem of \cite{van de ven} and (\ref{cover branch}), we have the following corollary. 

\begin{Cor}\label{cor 2cases}
For a normal triple cover $\pi:X\to\bP^2$ with $\deg\overline{\gD}_{\pi}=6,$ 
$\calT_{\pi}$ is either 
$\calO_{\bP^2}(-2)\oplus\calO_{\bP^2}(-1)$ or $\gO_{\bP^2},$ 
where $\gO_{\bP^2}$ is the cotangent sheaf of $\bP^2.$ 
\end{Cor}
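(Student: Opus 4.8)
The plan is to feed the uniformity of $\calT_{\pi}$, just established in the Proposition, into Van de Ven's classification of uniform rank-two bundles on $\bP^2$, and then to pin down the remaining discrete data using the two numerical constraints already in hand. Recall from (\ref{cover branch}) that the line bundle associated to $\overline{\gD}_{\pi}$ is $(\det\calT_{\pi})^{-2}$, so $\deg\overline{\gD}_{\pi}=6$ forces $\det\calT_{\pi}\cong\calO_{\bP^2}(-3)$; and, as observed before the Proposition, for a general line $L$ the restriction $\calT_{\pi,L}$ is $\calO_L(-1)\oplus\calO_L(-2)$, which by uniformity is then the splitting type on \emph{every} line.

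Van de Ven's theorem asserts that a uniform rank-two bundle on $\bP^2$ is either a direct sum $\calO_{\bP^2}(a)\oplus\calO_{\bP^2}(b)$ or a twist $T_{\bP^2}(k)$ of the tangent bundle, and I would dispose of these two cases in turn. In the split case the splitting type on any line is $(a,b)$, so matching the general splitting type $(-1,-2)$ forces $\{a,b\}=\{-1,-2\}$ and hence $\calT_{\pi}\cong\calO_{\bP^2}(-1)\oplus\calO_{\bP^2}(-2)$, which is automatically consistent with $\det\calT_{\pi}\cong\calO_{\bP^2}(-3)$. In the tangent case I would use $\det T_{\bP^2}\cong\calO_{\bP^2}(3)$, so that $\det(T_{\bP^2}(k))\cong\calO_{\bP^2}(2k+3)$; equating with $\calO_{\bP^2}(-3)$ gives $k=-3$. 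Since for a rank-two bundle the dual satisfies $T_{\bP^2}^{\vee}\cong T_{\bP^2}\otimes(\det T_{\bP^2})^{-1}\cong T_{\bP^2}(-3)$, this identifies $\calT_{\pi}\cong T_{\bP^2}(-3)\cong\gO_{\bP^2}$, the cotangent sheaf, completing the dichotomy.

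The only point requiring any care is the bookkeeping in the tangent case: verifying $\det T_{\bP^2}\cong\calO_{\bP^2}(3)$ (equivalently $K_{\bP^2}\cong\calO_{\bP^2}(-3)$), applying the rank-two duality identity to obtain $T_{\bP^2}(-3)\cong\gO_{\bP^2}$, and checking that the resulting splitting type $\gO_{\bP^2}|_L\cong\calO_L(-1)\oplus\calO_L(-2)$ indeed agrees with the general splitting type $(-1,-2)$. I do not expect a genuine obstacle here: once the Proposition supplies uniformity and (\ref{cover branch}) supplies $\det\calT_{\pi}\cong\calO_{\bP^2}(-3)$, the statement follows from Van de Ven's theorem by these elementary determinant and splitting-type computations alone.
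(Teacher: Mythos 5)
Your proposal is correct and follows essentially the same route as the paper, which derives the corollary directly from Van de Ven's classification of uniform rank-two bundles together with the identity $\det\calT_{\pi}\cong\calO_{\bP^2}(-3)$ coming from (\ref{cover branch}). You simply spell out the determinant and splitting-type bookkeeping that the paper leaves implicit.
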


We first consider the case where  $\calT_{\pi}\cong\calO_{\bP^2}(-2)\oplus\calO_{\bP^2}(-1).$ 

\begin{Prop}\label{prop cubic}
Let $\pi:X\to \bP^2$ be a normal triple cover with 
$\calT_{\pi}\cong\calO_{\bP^2}(-2)\oplus\calO_{\bP^2}(-1).$ 
Then $X$ is a normal cubic surface in $\bP^3,$ 
and $\pi$ is identified with a projection centered at a point of $\bP^3\setminus X.$ 
\end{Prop}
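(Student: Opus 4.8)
The plan is to exploit the Tschirnhausen splitting $\calT_{\pi}\cong\calO_{\bP^2}(-2)\oplus\calO_{\bP^2}(-1)$ together with the trisection characterization of (\ref{cover trisec}). Writing $\calL=\calO_{\bP^2}(1)$ and $\calM=\calO_{\bP^2}(2)$, we have $\calT_{\pi}\cong\calL^{-1}\oplus\calM^{-1}$ with $\calM=\calL^2$, so $\calT_{\pi}\cong\calO_{\bP^2}(-1)\oplus\calO_{\bP^2}(-2)=\calL^{-1}\oplus\calL^{-2}$. By (\ref{cover trisec}), since $\pi$ is not \'etale (its branch divisor has degree $6$), this is exactly the condition for $X$ to be a triple section in the total space of the line bundle $\calL=\calO_{\bP^2}(1)$ over $\bP^2$. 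Thus $X$ sits inside $\bV(\calO_{\bP^2}(-1))$ as a triple section, and the first task is to turn this abstract statement into a concrete cubic equation.

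First I would make the construction explicit using the local description in (\ref{cover mult}) and (\ref{cover eq}). With the splitting $\calT_{\pi}\cong\calO_{\bP^2}(-1)\oplus\calO_{\bP^2}(-2)$, choose the basis $\{z,w\}$ adapted to the two summands; then by (\ref{cover split}) the coefficients have prescribed degrees: $a\in H^0(\calO_{\bP^2}(1))$, $b\in H^0(\calO_{\bP^2}(0))=\bC$, $c\in H^0(\calO_{\bP^2}(3))$ and $d\in H^0(\calO_{\bP^2}(2))$. The total space $\bV(\calO_{\bP^2}(-1))$ is the blow-up of $\bP^3$ at a point, or more usefully, its sections of $\calO_{\bP^2}(1)$ correspond to the linear forms $x_0,x_1,x_2$ together with the fiber coordinate playing the role of a fourth coordinate. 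The key geometric point is that $\bV(\calO_{\bP^2}(-1))$ is naturally $\bP^3$ blown up at one point $P$, and the projection $\bP^3\dashrightarrow\bP^2$ from $P$ is resolved by this blow-up; the triple section $X$ meets a general fiber (a line through $P$) in three points, hence its image (or rather $X$ itself, if it misses the exceptional locus) is a cubic surface. I would verify that the defining equations $z^2-\phi(z^2)=zw-\phi(zw)=w^2-\phi(w^2)=0$ of (\ref{cover eq}), after homogenizing with respect to the grading induced by the splitting, collapse to a single cubic relation in the homogeneous coordinates of $\bP^3$.

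The main obstacle will be the bookkeeping that identifies $\bV(\calO_{\bP^2}(-1))$ with the blow-up of $\bP^3$ at a point and shows that the triple section $X$ is the strict transform of a genuine cubic surface disjoint from the center $P$. Concretely, I must check that $X$ does not meet the exceptional divisor of the blow-up (equivalently, that the fiber coordinate is a genuine linear coordinate $x_3$ on $\bP^3$ and that $X$ is defined by a cubic $f(x_0,x_1,x_2,x_3)=0$ with $P=(0:0:0:1)\notin X$), so that the contraction $\bV(\calO_{\bP^2}(-1))\to\bP^3$ restricts to an isomorphism onto its image $X\subset\bP^3$. Granting this, $\pi$ is the restriction of the bundle projection $\bV(\calO_{\bP^2}(-1))\to\bP^2$, which under the identification is exactly the linear projection from $P$. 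Finally, normality of $X$ as a surface is inherited: $X$ is normal by hypothesis (it is the normal cover), and the map $X\to X\subset\bP^3$ is an isomorphism, so the image cubic surface is normal.

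I would close by assembling these pieces: the splitting gives the trisection via (\ref{cover trisec}); the explicit equations of (\ref{cover eq}) with the degree constraints from (\ref{cover split}) produce the cubic; the identification of the ambient bundle with the blow-up of $\bP^3$ at $P$ realizes $X$ as a cubic surface in $\bP^3$ missing $P$; and the bundle projection becomes the projection from $P$. The normality of $X$ transfers to the cubic surface since the morphism onto it is an isomorphism. The one step demanding genuine care, rather than routine computation, is confirming $P\notin X$ and that $X$ avoids the exceptional locus, which is what guarantees the projection is a well-defined finite morphism of degree $3$ centered at a point of $\bP^3\setminus X$.
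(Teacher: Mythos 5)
Your proposal is correct and follows essentially the same route as the paper: both invoke (\ref{cover trisec}) to realize $X$ as a triple section of the total space of $\calO_{\bP^2}(1)$, identify the natural compactification $\bP(\calO_{\bP^2}\oplus\calO_{\bP^2}(-1))$ with the blow-up of $\bP^3$ at a point, observe that $X$ misses the exceptional divisor, and conclude that $X$ is a cubic and $\pi$ the projection from the center. The step you flag as delicate (disjointness from the exceptional locus) is exactly the one the paper asserts, and it follows since $X$ lies in the affine bundle part, i.e.\ the homogenized cubic is monic in the fiber coordinate $x_3$.
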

\begin{proof}
Since $\bP^2$ is simply connected, $\pi$ is not \'etale. 
By (\ref{cover trisec}), $X$ is a triple section of the total space of the line bundle $\calO_{\bP^2}(1).$ 
Hence it is naturally embedded in the $\bP^1$-bundle $\bP(\calO_{\bP^2}\oplus\calO_{\bP^2}(-1)).$ 
Note that $\bP(\calO_{\bP^2}\oplus\calO_{\bP^2}(-1))$ is a blowing-up of $\bP^3$ at a point, 
and $X$ has no intersections with the exceptional set of the blowing-up under the natural embedding. 
Thus $X$ is a cubic surface in $\bP^3,$ and 
$\pi$ is identified with a projection centered at a point of $\bP^3.$ 
\end{proof}

Next we show that $X$ is a $\bP^1$-bundle over an elliptic curve 
if $\calT_{\pi}\cong\gO_{\bP^2}.$ 
Let $V$ be a vector space of dimension $3,$ 
and $v_0,v_1,v_2$ a basis of $V.$ 
We regard $\bP^2$ as the set of $1$-dimensional subspaces of $V,$ $\bP(V).$ 
Then $\dP^2=\bP(V^{\ast}),$ where $V^{\ast}$ is the dual space of $V,$ 
and we can regard $x_0,\ x_1$ and $x_2$ as the dual of $v_0,\ v_1$ and $v_2,$ respectively.  
Note that $V$ and $V^{\ast}$ are naturally identified with
$H^0(\dP^2,\calO_{\dP^2}(1))$ and $H^0(\bP^2,\calO_{\bP^2}(1)),$ respectively.

\begin{Lem}\label{lem. section}
There is a natural isomorphism of vector spaces
\[ \gh : H^0(\dP^2,\calO_{\dP^2}(3))\overset{\sim}{\to} H^0(\bP^2,(S^3\gO_{\bP^2})^{\ast}\otimes\det\gO_{\bP^2}). \]
Furthermore $\gh$ is defined as follows:

Let $f$ be a global section of $\calO_{\dP^2}(3)$ as follows: 
\begin{align*} 
f=&\  t_{{1}}{v_{{0}}}^{3}+3\,t_{{2}}{v_{{0}}}^{2}v_{{1}}+3\,t_{{3}}{v_{{0}}
}^{2}v_{{2}}+3\,t_{{4}}v_{{0}}{v_{{1}}}^{2}+3\,t_{{5}}v_{{0}}v_{{1}}v_
{{2}} \\ 
&+3\,t_{{6}}v_{{0}}{v_{{2}}}^{2}+t_{{7}}{v_{{1}}}^{3}+3\,t_{{8}}{v
_{{1}}}^{2}v_{{2}}+3\,t_{{9}}v_{{1}}{v_{{2}}}^{2}+t_{{10}}{v_{{2}}}^{3
},
 \end{align*}
where $t_1,\dots,t_{10}\in\bC.$ 
Then  $\gh(f)$
is locally 
\[ -b_f\, (z^3)^{\ast}\otimes(z\wedge w)+a_f\, (z^2w)^{\ast}\otimes(z\wedge w) 
-d_f\, (z w^2)^{\ast}\otimes(z\wedge w) + c_f\, (w^3)^{\ast}\otimes(z\wedge w), \] 
where
\begin{align*}
a_f&= -\,t_{{1}}u_{{2}}{u_{{1}}}^{2}+2\,t_{{2}}u_{{2}}u_{{1}}
+\,t_{{3}}{u_{{1}}}^{2}-\,t_{{4}}u_{{2}}-\,t_{{5}}u_{{1}}+\,t_{{8}}, \\
b_f&= t_{{1}}{u_{{1}}}^{3}-3\,t_{{2}}{u_{{1}}}^{2}+3\,t_{{4}
}u_{{1}}-t_{{7}},  \\
c_f&=-t_{{1}}{u_{{2}}}^{3}+3\,t_{{3}}{u_{{2}}}^{2}-3\,t_{{6}}u
_{{2}}+t_{{10}}, \\
d_f&= t_{{1}}{u_{{2}}}^{2}u_{{1}}-\,t_{{2}}{u_{{2}}}^{2}
-2\,t_{{3}}u_{{2}}u_{{1}}+\,t_{{5}}u_{{2}}+\,t_{{6}}u_{{1}}-\,t_{{9}}, 
\end{align*}
and $z$ and $w$ are the differential forms $d u_1$ and $d u_2,$ respectively. 
\end{Lem}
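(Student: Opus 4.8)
The plan is to realise $\gh$ as the intrinsic map ``restrict a cubic form to the dual line'' and then to check that on the chart $U$ it is computed by the displayed formula. First I would pin down the target bundle. On $\bP^2=\bP(V)$ the tautological sequence
\[ 0 \to \calO_{\bP^2}(-1) \to V\otimes\calO_{\bP^2} \to Q \to 0 \]
gives $\gO_{\bP^2}\cong\calO_{\bP^2}(-1)\otimes Q^{\vee}$, hence $\gO_{\bP^2}^{\vee}\cong\calO_{\bP^2}(1)\otimes Q$ and $\det\gO_{\bP^2}\cong\calO_{\bP^2}(-3)$. Consequently
\[ (S^3\gO_{\bP^2})^{\vee}\otimes\det\gO_{\bP^2}\cong \calO_{\bP^2}(3)\otimes S^3 Q\otimes\calO_{\bP^2}(-3)\cong S^3 Q, \]
so that $H^0(\bP^2,(S^3\gO_{\bP^2})^{\vee}\otimes\det\gO_{\bP^2})\cong H^0(\bP^2,S^3 Q)$. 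By Bott's formula (equivalently, the standard computation of the cohomology of symmetric powers of the universal quotient) this equals $S^3 V$, of dimension $10$; and $H^0(\dP^2,\calO_{\dP^2}(3))=S^3 V$ likewise has dimension $10$.

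Next I would give the geometric definition of $\gh$. For $x=[\tilde x]\in\bP^2$ the fibre $Q_x=V/\langle\tilde x\rangle$ is canonically $(\tilde x^{\perp})^{\vee}$, where $\tilde x^{\perp}\subset V^{\ast}$ is the $2$-plane cutting out the dual line $\ell_x=\bP(\tilde x^{\perp})=q(p^{-1}(x))\subset\dP^2$. Thus a cubic $f\in S^3 V=H^0(\dP^2,\calO_{\dP^2}(3))$, viewed as a cubic form on $V^{\ast}$, restricts to $f|_{\tilde x^{\perp}}\in S^3\big((\tilde x^{\perp})^{\vee}\big)=S^3 Q_x$, and $x\mapsto f|_{\tilde x^{\perp}}$ is a global section of $S^3 Q\cong(S^3\gO_{\bP^2})^{\vee}\otimes\det\gO_{\bP^2}$. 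I define $\gh(f)$ to be this section; it is $\bC$-linear in $f$ and manifestly equivariant for the $GL(V)$-action on both sides, which is the asserted naturality.

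Finally I would prove that $\gh$ is an isomorphism and reconcile it with the displayed formula. Injectivity is read off from the coefficients: $b_f\equiv 0$ forces $t_1=t_2=t_4=t_7=0$, $c_f\equiv 0$ forces $t_1=t_3=t_6=t_{10}=0$, and then $a_f\equiv 0$, $d_f\equiv 0$ give $t_5=t_8=t_9=0$; with the dimension count this makes $\gh$ an isomorphism. To obtain the explicit formula, take $x=[1:u_1:u_2]\in U$, so $\tilde x=v_0+u_1v_1+u_2v_2$ and $\tilde x^{\perp}=\{v_0+u_1v_1+u_2v_2=0\}$; substituting $v_0=-u_1v_1-u_2v_2$ into $f$ and collecting monomials in the basis $[v_1],[v_2]$ of $Q_x$ yields
\[ f|_{\tilde x^{\perp}}=-b_f\,v_1^3+3a_f\,v_1^2v_2-3d_f\,v_1v_2^2+c_f\,v_2^3 \in S^3 Q_x. \]
Under $z=du_1=\tilde x\otimes[v_1]^{\vee}$, $w=du_2=\tilde x\otimes[v_2]^{\vee}$ and $z\wedge w=\tilde x^{2}\otimes([v_1]^{\vee}\wedge[v_2]^{\vee})$, the symmetric-power duality identifying $S^3 Q_x$ with $(S^3\gO_{x})^{\vee}\otimes\det\gO_{x}$ sends these monomial coefficients to $-b_f(z^3)^{\vee}\otimes(z\wedge w)+a_f(z^2w)^{\vee}\otimes(z\wedge w)-d_f(zw^2)^{\vee}\otimes(z\wedge w)+c_f(w^3)^{\vee}\otimes(z\wedge w)$, the binomial factors $1,3,3,1$ absorbing the $3$'s. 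The main obstacle is precisely this last bookkeeping: tracking the twists in $(S^3\gO)^{\vee}\otimes\det\gO\cong S^3 Q$ together with the binomial normalisations of the symmetric-power pairing, so that the coefficients collapse to exactly $-b_f,a_f,-d_f,c_f$. Everything else (linearity, injectivity, the dimension count and naturality) follows immediately once the frame $z=du_1,\ w=du_2$ is expressed through $\tilde x$ and the $[v_i]$.
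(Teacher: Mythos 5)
Your proof is correct and follows essentially the same route as the paper: both identify $(S^3\gO_{\bP^2})^{\ast}\otimes\det\gO_{\bP^2}$ with $S^3$ of the Euler-sequence quotient $Q\cong\gO_{\bP^2}^{\ast}(-1)$, define $\gh$ via the symmetric cube of the surjection $V\otimes\calO_{\bP^2}\to Q$ (your ``restrict $f$ to $\tilde x^{\perp}$'' is exactly the fibrewise description of the paper's $S^3\ga$, and your symmetric-power duality with binomial factors $1,3,3,1$ is the paper's $\gk$), and verify the displayed formula by substituting $v_0=-u_1v_1-u_2v_2$. You in fact supply more detail than the paper, which leaves the injectivity and the final coefficient check as ``direct computation.''
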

\begin{proof}
Note that the $\bP^1$-bundle $\bP(\gO_{\bP^2})$ over $\bP^2$ is isomorphic to $F,$ 
and the projection $\bP(\gO_{\bP^2})\to\bP^2$ coincides with $p:F\to\bP^2.$ 
The canonical embedding $F\hookrightarrow\bP^2\times\dP^2$ is given by the surjection $\ga$ of the exact sequence 
\[ 0 \to \calO_{\bP^2}(-1) \to V\otimes\calO_{\bP^2} \overset{\ga\ }{\to} \gO_{\bP^2}^{\ast}(-1) \to 0, \]
where $\ga$ is locally defined by $\ga(v_0)=-u_1z^{\ast}/x_0-u_2w^{\ast}/x_0,$ $\ga(v_1)=z^{\ast}/x_0$ and $\ga(v_2)=w^{\ast}/x_0$ (cf. \cite{hartshorne}). 
Let $\calO_{F}(1)$ be an invertible sheaf on $F$ such that $p_{\ast}\calO_F(1)\cong\gO_{\bP^2}^{\ast}(-1).$ 
Then $\ga$ induces an isomorphism $q^{\ast}\calO_{\dP^2}(3)\overset{\sim}{\to}\calO_F(3).$ 
In particular, since $H^0(F,\calO_F(3))$ and $H^0(F,q^{\ast}\calO_{\dP^2}(3))$ are identified with 
$H^0(\bP^2,S^3(\gO_{\bP^2}^{\ast}(-1)))$ and $S^3V,$ respectively, 
the symmetric product of $\ga$ gives an isomorphism 
\[ S^3\ga:H^0(\dP^2,\calO_{\dP}(3))\overset{\sim}{\to}H^0(\bP^2,S^3(\gO_{\bP^2}^{\ast}(-1))). \]
Note that there is the natural isomorphism $\gk:(S^3\gO_{\bP^2})^{\ast}\otimes\det\gO_{\bP^2}\overset{\sim}{\to} S^3(\gO_{\bP^2}^{\ast}(-1)),$
which is locally defined by 
\begin{align*}
&(z^3)^{\ast}\otimes(z\wedge w) \mapsto (z^{\ast})^3/x_0^3,
&(z^2w)^{\ast}\otimes(z\wedge w) \mapsto 3\,(z^{\ast})^2w^{\ast}/x_0^3, \\
&(w^3)^{\ast}\otimes(z\wedge w) \mapsto (w^{\ast})^3/x_0^3, 
&(z w^2)^{\ast}\otimes(z\wedge w) \mapsto 3\,z^{\ast}(w^{\ast})^2/x_0^3.
\end{align*}
Therefore we obtain a natural isomorphism $\gh=\gk^{-1}\circ S^3\ga.$ 
We can see the second assertion by direct computation. 
\end{proof}

Let $f=f(v_0,v_1,v_2)$ be a global section of $\calO_{\dP^2}(3)$ as in Lemma~\ref{lem. section}. 
Put $\gd_f=\gd_f(u_1,u_2)$ the discriminant of $f(-u_1v_1-u_2v_2,v_1,v_2)$ with respect to $v_1$ and $v_2.$ 
We denote $D$ for $\gh(f)$ in (\ref{cover branch}) by $D_f.$ 

\begin{Lem}\label{lem discrim}
Let $f,\ \gd_f$ and $D_f$ be as above. 
Then $\gd_f$ coincides with $D_f$ up to multiplying constants. 
\end{Lem}
\begin{proof}
We can check the assertion by direct computation. 
\end{proof}

We can identify 
$H^0(\bP^2,(S^3\gO_{\bP^2})^{\ast}\otimes\det\gO_{\bP^2})$ with 
$H^0(\dP^2,\calO_{\dP^2}(3))$ by the isomorphism $\gh.$

\begin{Prop}\label{prop diff-branch}
Let $\pi:X\to \bP^2$ and $\pi':X'\to\bP^2$ be normal triple covers corresponding to $f,f'\in H^0(\dP^2,\calO_{\dP^2}(3)),$ respectively. 
Then $\gD_{\pi}=\gD_{\pi'}$ if and only if 
$f'=\gl f$ for some non-zero constant $\gl.$ 
In particular, there is an isomorphism $\gs:X\to X'$ such that $\pi=\pi'\circ\gs$ in this case. 
\end{Prop}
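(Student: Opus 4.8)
The plan is to read off both directions from the correspondence (\ref{cover corre})--(\ref{cover branch}) together with Lemmas~\ref{lem. section} and \ref{lem discrim}, and to recover $f$ from $\gD_\pi$ by biduality of plane curves.

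First I would treat the implication $f'=\gl f\Rightarrow\gD_\pi=\gD_{\pi'}$ together with the final ``in particular''. By Corollary~\ref{cor 2cases} both covers have $\calT_\pi\cong\calT_{\pi'}\cong\gO_{\bP^2}$, so by (\ref{cover corre}) each is determined by a map $S^3\gO_{\bP^2}\to\det\gO_{\bP^2}$, i.e. by $f,f'\in H^0(\dP^2,\calO_{\dP^2}(3))$ under $\gh$. Two such maps give covers isomorphic over $\bP^2$ exactly when they differ by an automorphism of the Tschirnhausen module $\gO_{\bP^2}$: indeed an isomorphism of covers induces an algebra isomorphism of $\pi_{\ast}\calO_X$ preserving $\calO_{\bP^2}$ and $\calT_\pi$, hence an element of $\Aut(\gO_{\bP^2})$. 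Since $\gO_{\bP^2}$ is stable, $\mathrm{End}(\gO_{\bP^2})=\bC$ and $\Aut(\gO_{\bP^2})=\bC^{\ast}$ acts by the scalars $z\mapsto\mu z,\ w\mapsto\mu w$ of (\ref{cover mult}), under which $a,b,c,d$, and therefore $f$, are multiplied by $\mu$. Thus $f'=\gl f$ produces the scalar automorphism $\mu=\gl$, whose induced algebra isomorphism $\calO_{\bP^2}\oplus\gO_{\bP^2}\to\calO_{\bP^2}\oplus\gO_{\bP^2}$ gives $\gs:X\to X'$ with $\pi=\pi'\circ\gs$; in particular $\gD_\pi=\gD_{\pi'}$.

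For the converse I would give $\gD_\pi$ a geometric meaning. Set $\gC_f:=V(f)\subset\dP^2$. By (\ref{cover branch}) and Lemma~\ref{lem discrim}, $\overline{\gD}_\pi$ is cut out by $D_f=\gd_f$, the discriminant of the binary cubic obtained by restricting $f$ to the pencil $H_P\subset\dP^2$ of lines through $P\in\bP^2$. Hence $P\in\gD_\pi$ iff $H_P$ meets $\gC_f$ with a multiple intersection, i.e. iff $H_P$ is tangent to $\gC_f$ or passes through a point of $\Sing(\gC_f)$; equivalently, $\gD_\pi$ is the union of the dual curve $\gC_f^{\vee}$ and the lines dual to the points of $\Sing(\gC_f)$. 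Now $\gD_\pi=\gD_{\pi'}$ forces these configurations to coincide, and applying the reflexivity (biduality) theorem in characteristic zero to each non-linear irreducible component recovers $\gC_f$ from $\gD_\pi$; thus $\gC_f=\gC_{f'}$ as reduced curves. When $f$ is reduced this is a reduced cubic, so $f$ and $f'$ are proportional and $f'=\gl f$.

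The hard part will be the degenerate cubics. Reflexivity loses information precisely when $\gC_f$ has a line component (a line dualizes to a point, invisible in $\gD_\pi$) or when $f$ is non-reduced, and there the decomposition of $\gD_\pi$ into $\gC_f^{\vee}$ and the singular-point lines is no longer transparent. I would dispose of these using normality: for instance, if $f$ were a perfect cube then $\gd_f\equiv0$ and $\pi$ would be \'etale, contradicting that $\bP^2$ is simply connected, and more generally a non-reduced part or a line component makes $\gd_f$ carry the wrong multiplicities or the wrong degree for a normal cover with $\deg\overline{\gD}_\pi=6$. A short case analysis over the singularity and factorization type of $\gC_f$, controlled by $\deg\gD_\pi$ and the $S_\pi+2\,T_\pi$ bookkeeping, then pins $\gC_f$ down as a divisor on $\dP^2$ and completes the converse.
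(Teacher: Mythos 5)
Your argument for the forward implication and for the existence of $\gs$ (via $\Aut(\gO_{\bP^2})=\bC^{\ast}$ acting on $\Phi$ by scalars) is correct and in fact more explicit than the paper, which leaves that direction implicit; and your main-case converse is exactly the paper's: read $\gD_{\pi}$ off as the zero set of $\gd_f,$ hence as the union of the dual curve of $\gC_f=V(f)$ with the lines dual to its singular points, and recover $\gC_f$ by biduality.

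The gap is in the degenerate cases, precisely where you flag ``the hard part.'' Your proposed mechanism --- that a line component or a non-reduced part of $f$ forces $\gd_f$ to have the wrong degree or the wrong multiplicities for a branch divisor $S_{\pi}+2\,T_{\pi}$ of degree $6$ --- fails. Take $f=\ell\, c$ with $c$ a smooth conic meeting the line $\ell$ transversally. Since $\mathrm{disc}(gh)=\mathrm{disc}(g)\,\mathrm{disc}(h)\,\mathrm{Res}(g,h)^{2}$ for binary forms, $\gd_f$ is the dual conic $c^{\vee}$ (multiplicity $1$) times the squares of the two lines dual to the points of $\ell\cap c$: degree $2+2\cdot 2=6,$ multiplicities $1$ and $2.$ That is exactly the numerology of a legitimate branch divisor with $\deg S_{\pi}=2,$ $\deg T_{\pi}=2,$ so your bookkeeping does not exclude it; the same happens for $f$ a product of three non-concurrent lines, where $\gd_f=L_1^2L_2^2L_3^2.$ What actually rules these out is the irreducibility of $X$: the paper shows that if $f$ has a linear factor then the cubic equation $z^3-3A_fz+(b_fB_f-2a_fA_f)=0$ satisfied by $z$ acquires a factor linear in $z,$ so $X$ is reducible, contradicting the hypothesis that $X$ is a normal variety --- and since every reducible cubic form has a linear factor, this disposes of all degenerate cases at once. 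You need to replace the discriminant bookkeeping by this (or an equivalent) appeal to the irreducibility of $X.$ A small additional slip: for $f$ a perfect cube, $\gd_f\equiv 0$ does not make $\pi$ \'etale --- it makes every fiber degenerate, and the correct contradiction is again that the resulting algebra is not reduced, hence not normal.
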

\begin{proof}
We first show that $f$ is irreducible. 
Assume that $f$ is reducible. 
We may assume that $f$ has $v_2$ as its factor (i.e. $t_i=0$ for $i=7,\dots,10$), and $b_f\ne0.$ 
Then $z$ satisfies the following equation (cf. \cite{miranda}): 
\[ z^3-3\, A_f z + (b_f B_f-2\,a_f A_f)=0, \]
where $A_f=a_f^2-b_f d_f$ and $B_f=a_f d_f-b_f c_f.$ 
The above polynomial is divided by $z-t_2u_1u_2+t_3u_1^2+2\,t_4u_2-t_5u_1.$ 
Thus $X$ is reducible, which is a contradiction. 

Hence we may assume that $f$ is irreducible. 
Let $\gC^{\vee}$ be the curve on $\dP^2$ defined by $f=0.$ 
Then $\gd_f=0$ defines a divisor of degree $6$ on $\bP^2$ whose support is 
the union of the dual curve of $\gC^{\vee}$ and the lines corresponding to the singular points of $\gC^{\vee}.$ 
Therefore, by Lemma~\ref{lem discrim}, $\gD_{\pi}=\gD_{\pi'}$ if and only if $f'=\gl f.$ 
\end{proof}

The above proposition enables us to distinguish normal covers for $\gO_{\bP^2}$ by their branch loci. 

\begin{Prop}\label{prop flag}
Let $\pi:X\to\bP^2$ be a triple cover for $f\in H^0(\dP^2,\calO_{\dP^2}(3)).$ 
Then $X$ is normal 
if and only if 
the curve $\gC^{\vee}\subset\dP^2$ defined by $f=0$ is smooth. 
Moreover, if $X$ is normal, then 
$\gD_{\pi}$ is the dual curve of $\gC^{\vee},$ 
$X$ is isomorphic to $q^{-1}(\gC^{\vee})\subset F,$ and
$\pi$ is identified with the restriction of $p:F\to\bP^2$ to $q^{-1}(\gC^{\vee}).$ 
\end{Prop}

\begin{proof}
Suppose $X$ is normal. 
By the proof of Proposition~\ref{prop diff-branch}, 
$\gC^{\vee}$ is reduced and irreducible. 
Put $X'=q^{-1}(\gC^{\vee}).$ 
Then $p|_{X'}:X'\to\bP^2$ is a triple cover with $\calT_{p|_{X'}}\cong\gO_{\bP^2}$ (\cite[Proposition~8.1]{miranda}) whose branch locus is given by $\gd_f=0.$ 
Thus there is an isomorphism $\gs:X\to X'$ such that $\pi=p|_{X'}\circ\gs$ by Proposition~\ref{prop diff-branch}. 
If $\gC^{\vee}$ is singular, then $X'$ is not normal. 
Therefore $\gC^{\vee}$ is smooth. 
Conversely, 
we can see that $X$ is normal if $\gC^{\vee}$ is smooth as above argument. 
\end{proof}

\begin{proof}[Proof of Theorem~\ref{th main}]
Let $\pi:X\to\bP^2$ be a normal triple cover with $\deg\overline{\gD}_{\pi}=6.$ 
Then $X$ is either a normal cubic surface in $\bP^3$ or 
$q^{-1}(\gC^{\vee})\subset F$ for some smooth cubic curve 
$\gC^{\vee}\subset\dP^2$ by Corollary~\ref{cor 2cases}, Proposition~\ref{prop cubic} and \ref{prop flag}. 
We can check easily that 
the $9$ cusps of $\gD_{\pi}$ are total branched points of $\pi$ 
if $X=q^{-1}(\gC^{\vee})\subset F.$

Assume that $\gD_{\pi}$ is a sextic curve with $9$ cusps, and
the $9$ cusps are total branched points of $\pi.$ 
Suppose that $X$ is a normal cubic surface. 
Then we may assume that there are homogeneous polynomials $G_i(x_0,x_1,x_2)$ of degree $i$ for $i=2,3$ 
such that $X$ is defined by $x_3^3+3\, G_2\, x_3+2\, G_3=0,$ 
and $\pi$ is the projection centered at $P=(0:0:0:1).$ 
Here we regard $(x_0:x_1:x_2:x_3)$ as a system of homogeneous coordinates of $\bP^3.$ 
In this case, $\overline{\gD}_{\pi}=\gD_{\pi}$ is defined by $G_2^3+G_3^2=0.$ 
Since $\gD_{\pi}$ has just $9$ cusps as its singularities, 
$G_2=0$ and $G_3=0$ define reduced curves $\gC_2$ and $\gC_3,$ respectively, 
such that they intersect transversally each other. 
Then it is easy to see that the total branched points of $\pi$ are just $6$ intersection points of $\gC_2$ and $\gC_3,$ which is a contradiction. 
Hence $X$ is a subvariety of $F.$ 
\end{proof}

Our proof of Corollary~\ref{cor 1} is the same as the proof of \cite[Theorem~0.1]{ishida}, 
and we omit it. 



\end{document}